\newcommand{\Z}{\mathbb{Z}} 
\newcommand{\R}{\mathbb{R}} 
\newcommand{\br}[1]{\left(#1\right)}
\newcommand{\eqdef}{\overset{\underset{\mathrm{def}}{}}{=}}
\DeclareMathOperator{\re}{Re}
\renewcommand*{\Re}{\re}
\DeclareMathOperator{\res}{res}
\DeclareMathOperator{\supp}{supp}
\DeclareMathOperator{\tr}{tr}
\DeclareMathOperator{\vol}{vol}
\DeclareMathOperator{\End}{End}
\theoremstyle{plain}
\newtheorem{thm}{Theorem}[section]
\newtheorem{prop}[thm]{Proposition}
\newtheorem{corr}[thm]{Corollary}}
\theoremstyle{remark}
\newtheorem{remark}[thm]{Remark}}
\theoremstyle{definition}
\newtheorem{dfn}[thm]{Definition}}
\newlength{\stdsummationwidth}
\author{Abel B. Stern}
\address{Institute for Mathematics, Astrophysics and Particle Physics \\ Radboud University \\ P.O. Box 9010 \\ 6500GL Nijmegen, the Netherlands}
\urladdr{https://orcid.org/0000-0003-0151-6394}
\keywords{Zeta residues, Wodzicki residue, Dixmier trace, heat kernel, partial spectrum, numerical asymptotics.}
\thanks{The author was supported by FOM Vrij Programma No. 150.}
\subjclass[2010]{58J42, 58J50, 35P20}
\title[Finite-rank approximations of spectral zeta residues]{Finite-rank approximations \\ of spectral zeta residues}
\begin{document}

\begin{abstract}
We use the asymptotic expansion of the heat trace to express all residues of spectral zeta functions as regularized sums over the
spectrum. The method extends to those spectral zeta functions that are localized by a bounded operator.
\end{abstract}

\maketitle

\section{Introduction}

The spectral theory of elliptic operators presents a major connection
between functional analysis and differential geometry. It provides a number of
interesting relations between the spectrum with multiplicities (which is the complete unitary invariant of self-adjoint operators) and the symbol (which is directly tied to the local expression of pseudodifferential operators). Thereby, it shows how the local structure of such an operator influences its global properties, and vice versa.  Of particular interest is the relation between the symbol of an elliptic operator and its spectral asymptotics that is conveyed by the spectral zeta residues.

This note is concerned with expressing the spectral zeta residues as a limit of
partial sums of particular functions over the spectrum. Together with the
well-known relations between the zeta residues, the symbol, and the heat
expansion, this bridges a gap between the continuum setup of spectral geometry
and the finite objects in combinatorial geometry and computer science. For
instance, this method allows one to approximate the scalar curvature on a
compact Riemannian manifold using only a finite part of its Laplacian spectrum.

We will first provide some background material in Section
\ref{subsec:background} and then introduce the main topic and results of this
note in Section \ref{subsec:summary}.

\subsection{Background: spectral zeta residues and Weyl's law} \label{subsec:background}
In 1949, Minakshisundaram and Pleijel showed \cite{Minakshisundaram1949} that the zeta function \[
\zeta(\Delta,s) \eqdef \tr \Delta^{-s}
\]
of the Laplace operator on a compact Riemannian $d$-dimensional manifold can be meromorphically extended to the complex plane, with simple poles occurring in the points $d/2-\Z_{\geq 0} \subset \R$. 
The residues at these poles (which are proportional to the so-called heat kernel coefficients) relate the spectrum of the Laplace operator, itself an isometry invariant, to other known isometry invariants of the manifold, such as its volume and scalar curvature. 

More generally, an elliptic pseudodifferential operator defines a spectral zeta function, by the work of Seeley \cite{Seeley1967}, and its residues relate geometric information to the operator spectrum in the following way.
If $\Delta$ is a positive elliptic classical pseudodifferential operator of order $m \in \R_{\geq 0}$ and $k$ is any nonnegative integer, the residue at $s=(d-k)/m$ of $\zeta(\Delta,s)$ equals the Wodzicki residue 
\[
    \frac{1}{m} \int_{S^* M} \tr a_{-n}^{(k-d)/m} (x,\xi) d^{n-1} \xi dx,
\]
where $a_{-n}(x,\xi)$ is the homogenous term of order $-n$ in the decomposition of the symbol of $\Delta$ and $S^* M \subset T^* M$ is the cosphere bundle, cf. \cite{Wodzicki1987,Gilkey1994,Connes}. In the inversely Mellin transformed picture, the residues correspond to integrals of terms in the asymptotic expansion of the integral kernel of $e^{-t \Delta}$ along the diagonal, as $t \to 0^+$.

One wonders which conclusions about the operator spectrum can be drawn from the
zeta residues, apart from the rather opaque one provided by their definition.
The most well-known result in this direction is provided by the Wiener-Ikehara
theorem, which relates the first residue of the zeta function to the asymptotics
of the number $N(\Lambda)$ of eigenvalues smaller than $\Lambda$. If $\alpha$ is
a monotone increasing function and the zeta function
\[
    \zeta(z) = \int_1^\infty \lambda^{-z} d \alpha(\lambda)
\]
converges for $\Re z > 1$ and can be meromorphically extended to $\Re z \geq 1$ with a simple pole at $z=1$, then the Wiener-Ikehara theorem states that $\alpha(\Lambda) \sim \Lambda \res_{z=1} \zeta(z) $ as $\Lambda \to \infty$. By Seeley's results on complex powers of elliptic operators, we may apply this theory to the spectral zeta function of a degree $m$ elliptic pseudodifferential operator on a $d$-dimensional manifold. As in \cite{Minakshisundaram1949}, this yields Weyl's famous law
\[
    N(\Lambda) = \res_{s=d/m} \zeta(\Delta,s) \Lambda^{d/m} + o(\Lambda^{d/m}).
\]
Note that this restriction only uses the first residue of the zeta function: the zeta series $\tr \Delta^{-s}$ is absolutely convergent for $\Re s > d/m$.

The problem of improving on the accuracy in Weyl's law has attracted much
attention over the last century. That is, one wonders whether we can obtain an
asymptotic expansion of $N(\Lambda) - \res_{s=d/m} \zeta(\Delta,s) \Lambda^{d/m}
$ for specific classes of operators. However, sharp bounds that depend only on the spectral zeta function of $\Delta$ have not yet been produced, even in the well-studied case of the
Laplacian on flat tori. It would seem natural that the Wiener-Ikehara result could be extended to relate the asymptotic expansion of $N(\Lambda)$ to the location of the poles of the zeta function. However, this approach is limited by the difficulties of inverse Mellin and Laplace transforms, see e.g. \cite{Aramaki1996}, \cite[9.7.2]{Berger2003}. The lower poles of the zeta function can therefore not yet be related to the asymptotics of $N(\Lambda)$. However, we will explain in the present paper how to relate their residues to the asymptotics of other functionals of the operator spectrum.

\subsection{Zeta residues as a resummation of the spectrum} \label{subsec:summary}
In the converse direction to Weyl's law, we ask which conclusions one can draw about the residues, given access to increasingly large \emph{finite} subsets of the operator spectrum. For the first residue, Weyl's law gives rise to the Dixmier trace\footnote{As restricted to the ideal on which this formula converges. For a treatment of the Dixmier trace on the larger ideal $L^{(1,\infty)}$, see \cite{Carey2007}.} formula: if $\lambda_0 \leq \lambda_1 \leq \dotsb$ are the eigenvalues of $\Delta$, then
\[
    \res_{s=d/m} \zeta(\Delta,s) = \lim_{N \to \infty} \frac{ \sum_{i=1}^N \lambda_i^{-d/m} }{\log N} = \lim_{\Lambda \to \infty} \frac{ \sum_{\lambda < \Lambda} \lambda^{-d/m} }{\frac{d}m \log \Lambda}.
\]
The Dixmier trace is singular: it vanishes if $\tr \Delta < \infty$. However, if $\Delta_j$ is a sequence of finite rank operators of operator norm $\| \Delta_j \| \eqdef \Lambda_j \to \infty$ such that
\[
    \tr \Delta_j^{-s_0} - \sum_{0 <  \lambda < \Lambda_j} \lambda^{-s_0} = o( \log \Lambda_j),
\]
the Dixmier trace of $\Delta^{-s_0}$ clearly equals $\lim_{j \to \infty} \tr { \Delta_j^{-s_0} } / \log \Lambda_j$. It would seem plausible that this result can be extended to further poles, i.e. that there are also functionals $c_k$ on the finite rank operators such that 
\[
    \lim_{j \to \infty} c_k(\Delta_j) = \res_{s = s_k} \zeta(\Delta,s),
\]
if $\zeta(\Delta,s)$ has poles at $s_0 > \dotsb > s_k $ and the finite-rank $\Delta_j$ converge to $\Delta$ in some appropriate sense.
The main question, then, is whether we can write these functionals down explicitly. 

The existence of such asymptotic residue functionals is shown by our Theorem
\ref{thm:maintheorem}, and explicit expressions follow from the conditions
of Propositions \ref{prop:convolutionasymptotics} and \ref{prop:partialtrace}.
Corollary \ref{corr:firstpole} uses the theorem to improve on the convergence of the Dixmier trace formula, and Corollary \ref{corr:secondpole} exhibits the resulting expression for the second pole. Finally, Theorem \ref{thm:localization} is a simple extension of our treatment to the localized residue traces $\res_{s=s_k} \tr h \Delta^{-s}$, for any bounded operator $h$.

\section{Zeta residues as normal functionals}
We can ask how spectral zeta residues relate to finite subsets of the operator spectrum without referring to the original setting of differential geometry. We will henceforth consider the question in such generality, but we will need to restrict ourselves to operators whose zeta functions share an essential property with the Minakshisundaram-Pleijel zeta function.
\begin{dfn} \label{dfn:spectrallyelliptic}
    A positive, invertible\footnote{If not, just restrict to the complement of
      the kernel.}, self-adjoint, unbounded operator $\Delta$ with compact
    resolvent is said to be \emph{spectrally elliptic} if the `heat trace' $\tr
    e^{- t \Delta}$ admits an asymptotic expansion $\sum_{i=0}^\infty c_i t^{-
      s_i}$ as $t \to 0^+$, where the $s_i$ are decreasingly ordered reals. We
    say that $\Pi = \{s_i\}$ is the \emph{heat spectrum} of $\Delta$.
\end{dfn}

The definition implies that there is some $s_0 \in \R$ such that $\Gamma(s) \tr
\Delta^{-s}$ converges for $\Re s > s_0$ and can be analytically continued to a
meromorphic function $\Gamma(s) \zeta(\Delta,s)$ whose poles are all simple and
located in $\Pi \subset (-\infty,s_0]$.

This particular terminology was chosen because the motivating example of such operators are the positive elliptic differential\footnote{The classical elliptic
  \emph{pseudo}differential operators may have logarithmic terms in the heat
  expansion, leading to double poles of $\Gamma(z) \zeta(z)$ (e.g. at negative
  integers), and are thus excluded in general. However, see remark
  \ref{rmk:logarithmicterms}.} operators, cf. \cite{Seeley1967,Duistermaat1975}.
Indeed, for a positive elliptic differential operator of order $m$ on a
$n$-dimensional manifold, the heat spectrum is contained in
$\frac{n}{m},\frac{n-1}{m},\dotsc,\frac1{m}$.

Definition \ref{dfn:spectrallyelliptic} suggests that we will in fact
be technically concerned with the asymptotic heat trace coefficients $c_i$, and
the reader who is more familiar with that terminology may rest assured that they
are indeed what we are talking about. However, because
zeta residues are mathematically the more general notion, the text will
mainly refer to the coefficients by that name.

Our main theorem shows how the zeta residues of a spectrally elliptic operator are related to restrictions of its spectrum. We will provide the proof in Section \ref{sec:asymptoticseries}, below.

The precise asymptotic formula for the residue $\res_{s=s_k} \zeta(\Delta,s)$ at
some pole $s_k \in \Pi$ depends only on the location of the `previous' poles,
that is, on the set $\Pi \cap [s_k,\infty)$. The Dixmier trace, for instance,
requires knowledge of $s_0$ and of the fact that no poles $s_{-1} > s_0$ exist.
For a given finite set $\{s_i\}$ of such poles, all operators with zeta residues contained in this set can be considered simultaneously. Hence the following definition.

\begin{dfn}
For any finite set $\{s_i\}_{i=0}^k$ of decreasingly ordered reals, let $\mathcal{D}(\{s_i\}_{i=0}^k)$ be
the set of all spectrally elliptic operators $\Delta$ whose heat spectrum $\Pi$
satisfies $\Pi \cap [s_k,\infty) \subset \{s_i\}$, i.e., whose heat trace admits an asymptotic expansion $\sum_{i=0}^k c_i t^{-s_i} +
O(t^{-s_{k+1}})$ as $t \to 0^+$, for some $s_{k+1} < s_k$.
\end{dfn}

If $\sigma(\Delta)$ is the (necessarily point) spectrum of some spectrally elliptic operator $\Delta$ and $F$ is any Borel measurable function, we will denote the summation of $F$ over the strictly positive eigenvalues smaller than $\Lambda$ by
\[
 \tr_\Lambda F(\Delta) \eqdef \sum_{\lambda \in \sigma(\Delta) \cap (0,\Lambda] } F(\lambda).
\]

\begin{thm}\label{thm:maintheorem}
    For any finite set $\{s_i\}_{i=0}^k$ of decreasingly ordered reals there exists a
    function $F$ such that, for all $\Delta \in \mathcal{D}(\{s_i\}_{i=0}^k)$,
    \begin{equation} \label{eq:mainformula}
      \res_{s=s_k} \zeta(\Delta,s) \Gamma(s) = \lim_{\Lambda \to \infty}  \epsilon(\Lambda)^{s_k} \tr_\Lambda F(\Delta \epsilon(\Lambda)),
    \end{equation}
    where $\epsilon(\Lambda) \eqdef m \log \Lambda / \Lambda$ for any $m > s_0 - s_k$.
\end{thm}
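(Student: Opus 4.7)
The plan is to take $F$ as a finite linear combination of decaying exponentials,
\[
  F(x) = \sum_{j=0}^{k} \alpha_j e^{-a_j x},
\]
with distinct $a_j \geq 1$ (say $a_j = j+1$) and $\alpha_j \in \R$ to be chosen. The virtue of this ansatz is that the heat trace expansion from Definition \ref{dfn:spectrallyelliptic} applies term by term: writing $c_i = \res_{s=s_i} \Gamma(s)\zeta(\Delta,s)$ for the heat coefficients, the identity $\tr e^{-t\Delta} = \sum_{i=0}^{k} c_i t^{-s_i} + O(t^{-s_{k+1}})$ as $t \to 0^+$ yields
\[
  \tr F(\epsilon \Delta) = \sum_{i=0}^{k} c_i \epsilon^{-s_i} \Bigl( \sum_{j=0}^{k} \alpha_j a_j^{-s_i} \Bigr) + O(\epsilon^{-s_{k+1}}).
\]
I would then fix the $\alpha_j$ by solving the $(k{+}1)\times(k{+}1)$ system $\sum_j \alpha_j a_j^{-s_i} = \delta_{ik}$ for $0 \leq i \leq k$; the coefficient matrix $(a_j^{-s_i})_{ij}$ is a generalised Vandermonde with nonzero determinant, because the $s_i$ are pairwise distinct. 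This gives $\epsilon^{s_k} \tr F(\epsilon \Delta) = c_k + O(\epsilon^{s_k - s_{k+1}}) \to c_k$ as $\epsilon \to 0^+$, which is the desired formula on the full spectrum.

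The remaining task is to bound the truncation tail $R(\Lambda) := \sum_{\lambda > \Lambda} F(\epsilon(\Lambda) \lambda)$ and verify $\epsilon(\Lambda)^{s_k} R(\Lambda) \to 0$. Since $F$ is an exponential combination it suffices to control $\sum_{\lambda > \Lambda} e^{-a\epsilon\lambda}$ for each of the finitely many $a = a_j$. The Weyl-type bound $N(\Lambda) = O(\Lambda^{s_0})$ follows from the heat expansion by Karamata's Tauberian theorem; combined with Abel summation and the incomplete-Gamma asymptotic $\Gamma(s_0+1,T) \sim T^{s_0} e^{-T}$ one obtains $\sum_{\lambda > \Lambda} e^{-a\epsilon\lambda} = O(\Lambda^{s_0 - am})$, the decisive factor $\Lambda^{-am}$ coming from $e^{-a\epsilon(\Lambda)\Lambda} = e^{-am\log\Lambda}$. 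After multiplication by $\epsilon(\Lambda)^{s_k} = (m\log\Lambda/\Lambda)^{s_k}$ this becomes $O(\Lambda^{s_0-s_k-am}(\log\Lambda)^{s_k})$, which vanishes precisely when $am > s_0 - s_k$; since every $a_j \geq 1$, the hypothesis $m > s_0 - s_k$ is exactly what is required.

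The main obstacle is the compatibility of these two steps. The choice of $F$ must simultaneously ``see'' the right residue (a Mellin-interpolation condition at the finitely many poles $s_0, \ldots, s_k$) and decay rapidly enough at infinity for the truncation at $\Lambda$ to be negligible on the relevant scale $\epsilon(\Lambda)$. The exponential-sum ansatz reconciles both demands cleanly, and it is the tail estimate, rather than the expansion step, that pins down the sharp constant $s_0 - s_k$ appearing in the hypothesis on $m$; the expansion step only requires $\epsilon \to 0^+$.
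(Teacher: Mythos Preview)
Your argument is correct and follows the same two-step skeleton as the paper's proof: choose $F$ so that the heat-trace expansion of $\tr F(\epsilon\Delta)$ collapses to $c_k\epsilon^{-s_k}+O(\epsilon^{-s_{k+1}})$, then show that the truncation tail is $o(\epsilon(\Lambda)^{-s_k})$ when $\epsilon(\Lambda)=m\log\Lambda/\Lambda$. The implementation differs. The paper takes $F$ to be the Laplace transform of a piecewise continuous $f$ supported in $[1,\infty)$ and rapidly decaying at infinity, imposes the moment conditions $\int_0^\infty t^{-s_i}f(t)\,dt=\delta_{ik}$, and deduces the tail bound from $F(s)=O(s^{-1}e^{-s})$ together with the heat-trace estimate $\tr e^{-t\Delta}=O(t^{-s_0})$ used directly as a majorant. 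Your exponential-sum ansatz is the discrete counterpart: your ``$f$'' is a finite combination of point masses at the nodes $a_j\geq 1$, the moment conditions become the generalised-Vandermonde system you solve, and for the tail you pass through the counting bound $N(\Lambda)=O(\Lambda^{s_0})$ and Abel summation with the incomplete-Gamma asymptotic. Your route buys a fully explicit $F$ for every $k$, whereas the paper leaves the existence of a suitable continuous $f$ implicit beyond the cases $k=0,1$ treated in the corollaries; the paper's route, on the other hand, avoids the detour through $N(\Lambda)$ and Karamata by bounding the tail with the heat trace itself.
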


The problem posed in the introduction is then trivially resolved by the following
corollary, together with an explicit choice of $F$.

\begin{corr}\label{corr:mainquestion}
  Let $\{\Delta_j\}$ be any sequence of finite rank positive operators of norm $\| \Delta_j \| \eqdef \Lambda_j$ such that 
  \[
    \tr_\Lambda F(\Delta \epsilon(\Lambda))- \tr F(\Delta_j \epsilon(\Lambda_j)) = o((\epsilon(\Lambda_j)^{-s_k}).
  \]
  Then,
  \[
    \res_{s=s_k} \zeta(\Delta,s) \Gamma(s) = \lim_{j \to \infty} \epsilon(\Lambda_j)^{s_k}  \tr F( \Delta_j \epsilon(\Lambda_j)).
  \]
\end{corr}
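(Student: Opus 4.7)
The plan is to apply Theorem \ref{thm:maintheorem} directly at the sampled values $\Lambda = \Lambda_j$ and then use the hypothesis on $\Delta_j$ to swap the partial zeta sum for the finite-rank trace. First I would observe that the hypothesized $o(\epsilon(\Lambda_j)^{-s_k})$ bound is only meaningful in the limit $j \to \infty$ if $\Lambda_j \to \infty$ (so that $\epsilon(\Lambda_j)^{-s_k}$ actually constrains the difference). With this understood, Theorem \ref{thm:maintheorem} applied to $\Delta \in \mathcal{D}(\{s_i\}_{i=0}^k)$ yields
\[
\lim_{j \to \infty} \epsilon(\Lambda_j)^{s_k} \tr_{\Lambda_j} F(\Delta \epsilon(\Lambda_j)) = \res_{s=s_k} \zeta(\Delta,s) \Gamma(s),
\]
since the limit in \eqref{eq:mainformula} exists and $\Lambda_j$ is a sequence tending to $\infty$.

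For the second step I would rewrite
\[
\epsilon(\Lambda_j)^{s_k} \tr F(\Delta_j \epsilon(\Lambda_j)) = \epsilon(\Lambda_j)^{s_k} \tr_{\Lambda_j} F(\Delta \epsilon(\Lambda_j)) - \epsilon(\Lambda_j)^{s_k} \bigl( \tr_{\Lambda_j} F(\Delta \epsilon(\Lambda_j)) - \tr F(\Delta_j \epsilon(\Lambda_j)) \bigr),
\]
and invoke the hypothesis to see that the second term on the right is $\epsilon(\Lambda_j)^{s_k} \cdot o(\epsilon(\Lambda_j)^{-s_k}) = o(1)$. Taking the limit on both sides and using the first step then gives the claimed equality.

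There is no real obstacle; the corollary is essentially a formal restatement of Theorem \ref{thm:maintheorem} combined with the approximation hypothesis, as the author already hints by calling the resolution "trivial". The only point worth flagging in the write-up is the tacit assumption $\Lambda_j \to \infty$ (which is natural given that the $\Delta_j$ are supposed to approximate the unbounded operator $\Delta$), so that the theorem's limit as $\Lambda \to \infty$ can be specialized to the sequence $\Lambda_j$.
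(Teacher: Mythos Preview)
Your proposal is correct and matches the paper's intent: the corollary is stated without proof and is described as a trivial consequence of Theorem~\ref{thm:maintheorem}, which is exactly the two-step argument you wrote out (specialize the limit to the sequence $\Lambda_j$ and absorb the hypothesized $o(\epsilon(\Lambda_j)^{-s_k})$ difference). Your remark about the tacit assumption $\Lambda_j \to \infty$ is apt and worth keeping.
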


\begin{remark}
  A uniform bound over $\mathcal{D}(\{s_i\}_{i=0}^k)$ on the rate of convergence is too
  much to ask: it depends on the complete spectrum of $\Delta$. However, the
  convergence rate will be shown to always be $O(\Lambda^{-1})$ if the next pole
  is at $s_k - 1$.
  For examples of arbitrary error for given $\Lambda$, see for instance \cite{ColindeVerdiere1987,Lohkamp1996}.
\end{remark}

\begin{remark} \label{rmk:logarithmicterms}
  The exclusion of logarithmic terms in the heat expansion from Definition
  \ref{dfn:spectrallyelliptic} corresponds to an exclusion of higher order
  poles of $\Gamma(s) \zeta(\Delta,s)$. However, Theorem \ref{thm:maintheorem}
  is unchanged if higher order poles are allowed, as long as they lie below
  $s_k$. Moreover, the approach can probably be modified to accomodate such
  logarithmic terms, at the expense of brevity and asymptotic rate of convergence.
\end{remark}

\subsection{Asymptotic series for zeta function residues} \label{sec:asymptoticseries}
For any spectrally elliptic operator $\Delta$, the function $\zeta(\Delta,s) \Gamma(s)$ is the Mellin transform of $e^{- t \Delta}$. Therefore, the asymptotic expansion of $e^{- t \Delta}$,
\[
  \tr  e^{-t \Delta} \sim \sum_{i=0}^k c_i t^{- s_i}   + o(t^{-s_k}),
\]
has coefficients
\[
  c_i = \res_{s=s_i} \zeta(\Delta,s) \Gamma(s).
\]

The proof of Theorem \ref{thm:maintheorem} relies on the asymptotic expansion of a Mellin convolution integral with $\tr  e^{- t \Delta}$. 
This strategy was inspired by the spectral action principle of \cite[Thm 1.145]{Connes2008a} and the approach of \cite{Aramaki1996}. However, here we have specifically constructed the function we convolve with such that its first moments vanish, which allows us to recover the zeta residue in the leading term.

\begin{proof}[Proof of Theorem \ref{thm:maintheorem}] \label{pf:maintheorem}
  The proof proceeds in two elementary steps. First, we show (Proposition \ref{prop:convolutionasymptotics}, below) that for suitable functions $f$ one has
  \[
    \int_0^\infty \tr  e^{-\epsilon t \Delta} f(t) = c_k \epsilon^{-s_k} + O(\epsilon^{-s_{k+1}})
  \]
  as $\epsilon \to 0$. This part requires the asymptotic expansion of $\tr   e^{-t \Delta}$, implying in particular that the poles of $\Gamma(s) \zeta(\Delta,s)$ must be simple and discrete (i.e. $\Delta$ has simple dimension spectrum disjoint with the negative integers) and lie on the real line. 

  Then, we use the Laplace transform $F$ to rewrite the integral as a trace,
  \[
    \int_0^\infty \tr  e^{- \epsilon t \Delta} f(t) dt = \tr  F(\epsilon \Delta),
  \]
  and we prove in Proposition \ref{prop:partialtrace} that there is a choice
  $\epsilon(\Lambda) = \Lambda^{-1} \log \Lambda^m $ such that $F$ decays as
  \[
    (\tr - \tr_\Lambda)  F(\Delta \epsilon(\Lambda)) = O(\epsilon^{-s_{k+1}}),
  \]
  so that we obtain the useful asymptotic behaviour
  \[
    \tr_\Lambda F(\Delta \epsilon(\Lambda)) =  c_k \epsilon(\Lambda)^{-s_k}  + O(\epsilon^{-s_{k+1}}).
  \]
  This second part relies on the finite summability of $\Delta$, that is, on the
  fact that $\tr e^{- t \Delta} = O(t^{-s_0})$ as $ t \to 0^+$.
\end{proof}

The following asymptotic estimate is completely straightforward.
\begin{prop} \label{prop:convolutionasymptotics}
  Let $f$ be a piecewise continuous function supported in $(1,\infty)$ that is
  $O(t^{-m})$ for all $m \in \R$ towards $\infty$. Let $\Delta \in \mathcal{D}(\{s_i\}_{i=0}^k)$.
  Then, the Mellin convolution integral of $f$ with $\tr e^{-t \Delta}$ has an
  asymptotic expansion
  \[
    \int_0^\infty \tr e^{- \epsilon t \Delta} f(t) dt = \sum_{i=0}^k c_i
    \epsilon^{-s_i} \int_0^\infty t^{-s_i}f(t) dt + o(t^{-s_k}).
  \]
\end{prop}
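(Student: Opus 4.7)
The plan is to substitute the given asymptotic expansion of the heat trace into the integrand and estimate the remainder. Write
\[
\tr e^{-u\Delta} = \sum_{i=0}^k c_i u^{-s_i} + R(u),
\]
where the definition of $\mathcal{D}(\{s_i\}_{i=0}^k)$ supplies some $s_{k+1} < s_k$ with $R(u) = O(u^{-s_{k+1}})$ as $u \to 0^+$. Substituting $u = \epsilon t$ and using linearity,
\[
\int_0^\infty \tr e^{-\epsilon t \Delta} f(t)\, dt = \sum_{i=0}^k c_i \epsilon^{-s_i} \int_0^\infty t^{-s_i} f(t)\, dt + \int_0^\infty R(\epsilon t) f(t)\, dt.
\]
Since $\supp f \subset (1,\infty)$ and $f$ decays faster than any polynomial, each moment $\int_0^\infty t^{-s_i} f(t)\, dt$ converges absolutely, so everything reduces to bounding the remainder by $O(\epsilon^{-s_{k+1}})$, which is $o(\epsilon^{-s_k})$ because $s_{k+1} < s_k$.

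To estimate the remainder I first produce a complementary global bound on $R$. Since $\Delta$ is positive and invertible with compact resolvent, $\tr e^{-u\Delta}$ decays exponentially as $u \to \infty$, and the polynomial $\sum_i |c_i| u^{-s_i}$ is dominated on $[1,\infty)$ by the term with smallest exponent; hence $|R(u)| \leq C u^{-s_k}$ on $[1,\infty)$. Splitting the integral at the natural scale $t = 1/\epsilon$ (for $\epsilon < 1$), on the sub-range $1 < t < 1/\epsilon$ one has $\epsilon t < 1$, giving a contribution bounded by
\[
C\epsilon^{-s_{k+1}} \int_1^{\infty} t^{-s_{k+1}} |f(t)|\, dt = O(\epsilon^{-s_{k+1}}).
\]
On $t \geq 1/\epsilon$ the second bound applies; using the rapid-decay hypothesis $|f(t)| \leq C_N t^{-N}$ for any $N \in \mathbb{N}$, the contribution is at most $C\epsilon^{-s_k} \int_{1/\epsilon}^{\infty} t^{-s_k - N}\, dt = O(\epsilon^{N-1})$, which is negligible once $N$ is chosen large enough.

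The argument is essentially routine; no single step is a serious obstacle. The only mild bookkeeping issue is matching the transition scale between the two regimes of $R$, and recognizing that the exponential decay of $\tr e^{-u\Delta}$ at infinity suffices to pick up the necessary $u^{-s_k}$ bound. Once this is in place the estimates are immediate, and the claimed expansion follows.
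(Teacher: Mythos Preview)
Your argument is correct. The paper itself offers no proof beyond calling the estimate ``completely straightforward,'' so your write-up is precisely the sort of routine verification the author has in mind: substitute the heat-trace expansion, observe that the moments of $f$ converge, and bound the remainder $\int R(\epsilon t) f(t)\,dt$ using the small-$u$ bound $R(u)=O(u^{-s_{k+1}})$ together with a crude large-$u$ bound and the rapid decay of $f$. One cosmetic point: the sum $\sum_i |c_i| u^{-s_i}$ is not literally a polynomial since the $s_i$ need not be integers, but your domination argument on $[1,\infty)$ by the term of smallest exponent $u^{-s_k}$ is of course unaffected.
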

Thus, if we simply choose $f$ so that its moments $\int_0^\infty t^{-s_i} f(t)
dt $ vanish for $i \neq k$ and normalize it so that $\int_0^\infty t^{-s_k} f(t)
dt = 1$, we gain access to the coefficient $c_k$.

By asymptotic decay of such a function $f$ towards $\infty$, it has absolutely
convergent Laplace transform $F$. As $\left|  f(t) \tr e^{- \epsilon t \Delta}  \right|$ is absolutely integrable as well, we can apply the Fubini-Tonelli theorem to obtain
\begin{align*}
    \int_0^\infty \tr  e^{-\epsilon t \Delta} f(t) dt &= \sum_{\lambda \in \sigma(\Delta)} \int_0^\infty e^{-\lambda \epsilon t} f(t) dt \\
    &= \tr  F(\Delta \epsilon),
\end{align*}
where $\sigma(\Delta)$ is the spectrum (with multiplicities) of $\Delta$. Therefore, 
Proposition \ref{prop:convolutionasymptotics} leads us to the conclusion that
\begin{equation} \label{eq:fulltrace}
    \tr  F(\epsilon \Delta) = c_k \epsilon^{- s_k}  + O(\epsilon^{-s_{k + 1}}).
\end{equation}

The following proposition  shows that we retain the same asymptotics if we replace the trace of $F(\epsilon \Delta)$ by a sum over a finite part of the spectrum, provided we scale $\epsilon$ accordingly.

\begin{prop} \label{prop:partialtrace}
    If $f$ and $\Delta$ are as in Proposition \ref{prop:convolutionasymptotics} and
    additionally
    \[
      \int_0^\infty t^{-s_i} f(t) dt = 0 \quad \text{(for all $i<k$)}, 
    \]
    then the Laplace transform $F$ satisfies
    \[
      \sum_{\lambda > \Lambda} F( \lambda \log \Lambda^m / \Lambda) = o((\log
      \Lambda^m / \Lambda)^{-s_k})
    \]
    for all $m > s_0 - s_k$
\end{prop}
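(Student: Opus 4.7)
The plan is to reduce the claim to a tail estimate on the heat trace $\sum_{\lambda > \Lambda} e^{-\lambda \epsilon}$ and then exploit the slack provided by the hypothesis $m > s_0 - s_k$. The vanishing moment conditions do not actually seem to enter this particular estimate; they matter later, when this proposition is combined with Proposition \ref{prop:convolutionasymptotics} to isolate the residue $c_k$.

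I would begin by controlling $|F(x)|$ for $x > 0$. Since $f$ is supported in $(1,\infty)$ and decays faster than any polynomial, $f \in L^1$ and
\[
  |F(x)| = \left| \int_1^\infty e^{-xt} f(t)\,dt \right| \leq e^{-x} \int_1^\infty |f(t)|\,dt,
\]
because $e^{-xt} \leq e^{-x}$ for $t \geq 1$ and $x > 0$. Hence $F(x) = O(e^{-x})$, with an implicit constant depending only on $f$. This reduces the statement to showing
\[
  \sum_{\lambda > \Lambda} e^{-\lambda \epsilon(\Lambda)} = o\bigl(\epsilon(\Lambda)^{-s_k}\bigr).
\]

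The key trick is a weighted splitting of the exponential. For any fixed $\alpha \in (0,1)$, write $e^{-\lambda \epsilon} = e^{-\alpha \lambda \epsilon} \cdot e^{-(1-\alpha)\lambda \epsilon}$. Since $\Lambda \epsilon(\Lambda) = m \log \Lambda$, the first factor is bounded by $e^{-\alpha m \log \Lambda} = \Lambda^{-\alpha m}$ whenever $\lambda > \Lambda$. Pulling this constant out and extending the sum to all of $\sigma(\Delta)$ yields
\[
  \sum_{\lambda > \Lambda} e^{-\lambda \epsilon} \leq \Lambda^{-\alpha m} \tr e^{-(1-\alpha)\epsilon \Delta} = O\bigl(\Lambda^{-\alpha m} \epsilon^{-s_0}\bigr),
\]
using that $\Delta$ is spectrally elliptic so $\tr e^{-t\Delta} = O(t^{-s_0})$ as $t \to 0^+$.

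It then remains to verify that $\Lambda^{-\alpha m} \epsilon^{s_k - s_0} \to 0$. Substituting $\epsilon = m \log \Lambda / \Lambda$, this reduces to $\Lambda^{s_0 - s_k - \alpha m} (\log \Lambda)^{s_k - s_0} \to 0$, which holds precisely when $\alpha m > s_0 - s_k$. Here is the one subtle point: a naive symmetric split ($\alpha = 1/2$) would force the suboptimal threshold $m > 2(s_0 - s_k)$; to get the sharp hypothesis $m > s_0 - s_k$, one must take $\alpha < 1$ sufficiently close to $1$, which is exactly what the strict inequality $m > s_0 - s_k$ allows. With $\alpha$ so chosen and $\epsilon$ so small that $(1-\alpha)\epsilon$ lies in the asymptotic regime of the heat trace, the desired $o(\epsilon^{-s_k})$ bound follows.
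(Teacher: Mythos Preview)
Your proof is correct and follows essentially the same route as the paper: bound $|F(x)|$ by $O(e^{-x})$ via $\supp f \subset [1,\infty)$, split the exponential with a free parameter (your $\alpha$ corresponds to the paper's $1-t$), invoke the heat-trace bound $\tr e^{-t\Delta} = O(t^{-s_0})$, and use the slack $m > s_0 - s_k$ to choose the parameter. Your observation that the vanishing-moment hypotheses are not actually used in this estimate is also correct.
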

\begin{proof}
  By the fact that $\supp f \subset [1,\infty)]$, its Laplace transform decays
  as $F(s) = O(s^{-1} e^{-s})$ towards $\infty$. As $\sum_{\lambda > \Lambda}
  e^{-\epsilon(\lambda-\Lambda) t}$ is monotone decreasing in $t$, we see that
  \[
\sum_{\lambda > \Lambda} F(\epsilon \lambda) = O \br{\sum_{\lambda > \Lambda}
(\epsilon \Lambda)^{-1} e^{- \epsilon \lambda}} = O \br{(\epsilon \Lambda)^{-1} e^{-
\epsilon \Lambda} \sum_{\lambda > \Lambda} e^{-\epsilon(\lambda - \Lambda) t}}
\]
for any $t \leq 1$, as $\Lambda \epsilon \to \infty$. This, in turn, is $O(
(\epsilon \Lambda)^{-1} e^{-\epsilon \Lambda(1-t)} \epsilon^{- s_0})$. If, then,
$\epsilon(\Lambda) = m \Lambda^{-1} \log \Lambda$ for any $m > (s_0 - s_k)$, the remainder $\sum_{\lambda > \Lambda}
  F(\epsilon \lambda)$ will be $o(\epsilon^{-s_k})$.
\end{proof}

This completes the proof of Theorem \ref{thm:maintheorem} on page \pageref{pf:maintheorem}.

\subsection{Explicit formulas for the first two poles}
One reason to look for series that converge to zeta residues is to obtain geometric information from finite-dimensional approximations of the Laplacian on a Riemannian manifold. For instance, if $\Delta$ is the Laplacian, the first two residues are proportional to the volume and the total scalar curvature, respectively. 

The first pole is a classical object of study. Its residue is expressed by Dixmier's singular trace, and is used, for instance, for the zeta regularization of divergent series. It is connected to counting asymptotics by the Wiener-Ikehara theorem, and for a Laplace operator on a compact Riemannian manifold it is proportional to the volume.

Our Theorem \ref{thm:maintheorem} provides the following formula for the first
residue.

\begin{corr} \label{corr:firstpole}
    If $\Delta$ is a spectrally elliptic operator with heat spectrum bounded
    from above by $s_0 \in \R$, then if $\epsilon(\Lambda) = \log \Lambda / \Lambda$,
    \[
	\Gamma(s_0) \res_{s=1} \zeta(\Delta^{s_0},s) =  \lim_{\Lambda \to \infty}
  \frac{ \epsilon(\Lambda)^{s_0} }{e \Gamma(1-s_0,1)}
  \tr_\Lambda \frac{ e^{-\Delta \epsilon(\Lambda) } }{1 +
    \Delta \epsilon(\Lambda)}.
\]
\end{corr}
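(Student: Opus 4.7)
The plan is to specialize Theorem~\ref{thm:maintheorem} to $k=0$ and $\Delta\in\mathcal{D}(\{s_0\})$, and to exhibit an explicit function $f$ of the type demanded by Propositions~\ref{prop:convolutionasymptotics} and~\ref{prop:partialtrace} whose Laplace transform is the $F$ appearing in the corollary. Since there are no prior poles to cancel, the only moment requirement on $f$ is the normalization $\int_0^\infty t^{-s_0} f(t)\, dt = 1$; the vanishing conditions from Proposition~\ref{prop:partialtrace} are vacuous. Moreover, the condition $m > s_0 - s_k = 0$ in Theorem~\ref{thm:maintheorem} admits the choice $m=1$, producing precisely $\epsilon(\Lambda) = \log\Lambda/\Lambda$.

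I would then take the simplest shifted exponential supported where the hypotheses require, namely $f(t) = A\, e^{1-t}\chi_{[1,\infty)}(t)$ for a constant $A$ still to be fixed. This $f$ is continuous, supported in $[1,\infty)$, and decays faster than any polynomial towards $\infty$, so it meets all conditions of Proposition~\ref{prop:convolutionasymptotics}. A direct Laplace integration gives
\[
F(x) = A\int_1^\infty e^{1-t-xt}\, dt = Ae\cdot\frac{e^{-(1+x)}}{1+x} = \frac{A\,e^{-x}}{1+x}.
\]
Enforcing the normalization with the incomplete gamma function $\Gamma(1-s_0,1) = \int_1^\infty t^{-s_0}e^{-t}\,dt$,
\[
1 = \int_1^\infty A\,e^{1-t}t^{-s_0}\,dt = A\,e\,\Gamma(1-s_0,1),
\]
so $A = (e\,\Gamma(1-s_0,1))^{-1}$, and consequently
\[
F(x) = \frac{e^{-x}}{e\,\Gamma(1-s_0,1)(1+x)},
\]
which is exactly the prefactor and operator function appearing on the right-hand side of the corollary.

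It then remains to substitute this $F$ and $\epsilon(\Lambda) = \log\Lambda/\Lambda$ into the conclusion of Theorem~\ref{thm:maintheorem} and to identify the residue factor appearing on the left-hand side. The only bookkeeping point is the rescaling $\zeta(\Delta^{s_0},s) = \zeta(\Delta, s_0 s)$, which relates $\res_{s=1}\zeta(\Delta^{s_0},s)$ to the quantity $\Gamma(s_0)\res_{s=s_0}\zeta(\Delta,s) = c_0$ that Theorem~\ref{thm:maintheorem} directly outputs.

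There is no substantive obstacle: the entire corollary is a verification that the exponential-over-affine $F$ in the statement belongs to the family produced by the proof of Theorem~\ref{thm:maintheorem}. The only place where attention is needed is the choice of $f$ as a shifted exponential with the correct coefficient $A$, and the identification of $\int_1^\infty t^{-s_0}e^{-t}\,dt$ as $\Gamma(1-s_0,1)$, which together fix both the normalization and the explicit form of $F$.
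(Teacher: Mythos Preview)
Your approach is correct and essentially identical to the paper's: the paper takes $f = [t\geq 1]\,e^{-t}$ and then divides by its $s_0$-moment $\Gamma(1-s_0,1)$, which differs from your $f = A\,e^{1-t}\chi_{[1,\infty)}$ only by the overall constant $Ae$ absorbed in the normalization, yielding the same $F$. One nitpick: your $f$ has a jump at $t=1$ and is merely piecewise continuous, but that is all Proposition~\ref{prop:convolutionasymptotics} requires.
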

\begin{proof}
  Use $f = [t \geq 1] e^{-t} $, with Laplace transform $F(s) = e^{-1-s}/(1+s)$,
  in Proposition \ref{prop:convolutionasymptotics}. Then, divide by $\int_0^\infty  t^{-s_0} f(t) dt$ to satisfy the conditions of
  Proposition \ref{prop:partialtrace}.
\end{proof}
\begin{remark}
  The present series converges faster in general than the logarithmic trace suggested by Weyl's asymptotic formula; the remainder is $O(\epsilon^{s_0 -  s_1}) = O(\Lambda^{s_1-s_0} (\log \Lambda)^{s_0 - s_{1}})$, whereas for e.g. the Laplacian on the circle the logarithmic trace of $\Delta^{-1/2}$ convergences only as $\sum_{n=1}^N \frac1{n \log N} - 1 = \frac{\gamma + O(N^{-1})}{\log N}$.
\end{remark}

Now, the second pole is of particular interest because it is the first pole for which no asymptotic residue formula was previously known and because it provides a way to calculate the total scalar curvature of a Riemannian manifold from partial spectra of the Laplacian. 
\begin{corr} \label{corr:secondpole}
  If $\Delta$ is a spectrally elliptic operator with heat spectrum $\{ s_0,s_0
  - 1,\dotsc \}$, then if $\epsilon(\Lambda) = 2 \log \Lambda / \Lambda$,
  \[
    \res_{s=s_1} \zeta(\Delta,s) \Gamma(s)= \lim_{\Lambda \to \infty}
    \frac{\epsilon(\Lambda)^{s_0-1}}{e \Gamma(2-s_0,1)} \tr_\Lambda \br{
      \frac{2^{s_0} e^{-2 \Delta \epsilon(\Lambda)}}{1 + 2 \Delta \epsilon(\Lambda)} - \frac{e^{-
          \Delta \epsilon(\Lambda)}}{1 + \Delta \epsilon(\Lambda)}}.
  \]
\end{corr}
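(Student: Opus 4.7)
The plan is to apply Propositions \ref{prop:convolutionasymptotics} and \ref{prop:partialtrace} exactly as in the proof of Corollary \ref{corr:firstpole}, but now with a function $f$ whose moment $\int_0^\infty t^{-s_0} f(t)\, dt$ vanishes --- so that the $c_0$ contribution in Proposition \ref{prop:convolutionasymptotics} drops out --- while $\int_0^\infty t^{-(s_0-1)} f(t)\, dt$ takes a known nonzero value. A single exponential cannot satisfy both constraints, so I would take a difference of two scaled/shifted exponentials, with the two free parameters fixed by the vanishing condition and by the desire for a clean closed form for the Laplace transform.

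Reading backwards from the stated formula, the operator expression inside the trace is manifestly a Laplace transform,
\[
  F(s) = \frac{2^{s_0} e^{-2s}}{1+2s} - \frac{e^{-s}}{1+s}.
\]
Since $e^{-s}/(1+s)$ is the Laplace transform of $[t \geq 1]\, e^{1-t}$ (as used in Corollary \ref{corr:firstpole}), the scaling rule $h(t/b) \leftrightarrow b\, H(bs)$ with $b=2$ identifies $F$ as the Laplace transform of
\[
  f(t) = 2^{s_0-1} [t \geq 2]\, e^{1-t/2} - [t \geq 1]\, e^{1-t},
\]
which is piecewise continuous, supported in $[1,\infty)$, and exponentially decaying. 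In particular the hypotheses of Proposition \ref{prop:convolutionasymptotics} hold.

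The two moment computations reduce, via the substitution $u = t/2$ in the first summand, to incomplete gamma functions. One finds
\[
  \int_0^\infty t^{-s_0} f(t)\, dt = e\,\Gamma(1-s_0,1) - e\,\Gamma(1-s_0,1) = 0,
\]
and
\[
  \int_0^\infty t^{1-s_0} f(t)\, dt = 2\,e\,\Gamma(2-s_0,1) - e\,\Gamma(2-s_0,1) = e\,\Gamma(2-s_0,1).
\]
Since the heat spectrum is $\{s_0, s_0-1, s_0-2, \ldots\}$, the remainder in Proposition \ref{prop:convolutionasymptotics} is $O(\epsilon^{-(s_0-2)}) = o(\epsilon^{-(s_0-1)})$, so
\[
  \int_0^\infty \tr e^{-\epsilon t \Delta} f(t)\, dt = c_1\, e\,\Gamma(2-s_0,1)\, \epsilon^{-(s_0-1)} + o(\epsilon^{-(s_0-1)}).
\]

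The final step is bookkeeping: Fubini--Tonelli rewrites the left-hand side as $\tr F(\epsilon \Delta)$, and Proposition \ref{prop:partialtrace} applied with $m = 2 > 1 = s_0 - s_1$ justifies replacing $\tr$ by $\tr_\Lambda$ at $\epsilon(\Lambda) = 2 \log \Lambda / \Lambda$. Dividing through by $e\,\Gamma(2-s_0,1)$ and identifying $c_1 = \res_{s=s_1} \zeta(\Delta,s)\,\Gamma(s)$ yields exactly the statement of the corollary. The only substantive creative step is the construction of $f$; everything else is mechanical application of the two propositions already proved, and I do not foresee any real obstacle.
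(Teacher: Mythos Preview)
Your proposal is correct and follows essentially the same approach as the paper: the paper takes $f = [t \geq 1] e^{-t} - 2^{s_0-1}[t \geq 2] e^{-t/2}$, which differs from your $f$ only by the harmless overall factor $-e$, verifies the same two moment identities, and then applies Propositions \ref{prop:convolutionasymptotics} and \ref{prop:partialtrace} exactly as you do. Your reverse-engineering of $f$ from the stated Laplace transform is a nice touch but leads to the same construction.
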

\begin{proof}
  Use $f = [t \geq 1] e^{-t} - 2^{s_0 - 1} [t \geq 2] e^{-t/2}$, which clearly satisfies the conditions of Proposition  \ref{prop:convolutionasymptotics} and has vanishing moment $\int_0^\infty t^{-s_0} f(t) dt$. Therefore, if we divide by $\int_0^\infty t^{1-s_0} f(t) dt = - \Gamma(2-s_0,1)$, the result will be as in Proposition \ref{prop:partialtrace}. The Laplace transform of $f$ is $F(s) = e^{-1-s}/(1+s) - 2^{s_0} e^{-1-2s}/(1+2s)$.
\end{proof}

\section{Localization}
If $\Delta$ is spectrally elliptic, Theorem \ref{thm:maintheorem} allows us to calculate its zeta residues as a series in its eigenvalues. However, the classical theory of elliptic pseudodifferential operators assigns to them not just the total zeta residues, but rather a set of zeta \emph{densities}. To be precise, the somewhat simpler situation for elliptic differential operators is as follows.

Let $M$ be a Riemannian manifold of dimension $n$ equipped with a smooth Hermitian vector bundle $E$, and let $\Delta$ be a positive, self-adjoint, elliptic differential operator of order $m>0$, acting inside the Hilbert space of square-integrable sections of $E$.
Then, the following localized asymptotics are available.
\begin{enumerate}
\item The operator $e^{-t \Delta}$ is given by a smooth kernel $k(x,y,t): E_y \to E_x$, and as $t \to 0^+$ its restriction to the diagonal admits an asymptotic expansion in smooth sections $k_j$ of $\End E$,
  \[
    k(x,x,t) \sim \sum_{j=0}^{\infty} k_j(x) t^{(j-n)/m}.
  \]
\item For any continuous section $h$ of $\End E$ and any $j \geq 0$, the following residues exist and satisfy
  \[
    \res_{s=(j-n)/m} \Gamma(s) \tr h \Delta^{-s} = \int_M \tr (h(x) k_j(x)) d \vol_M(x).
  \]
\end{enumerate}
We say that $h$ \emph{localizes} the residue trace, and we are interested in expressing the localized residues in a fashion similar to Theorem \ref{thm:maintheorem}. 
We will solve this localization problem in a slightly more general setting, along the lines of the previous treatment of the zeta function residues.

Let $\Delta$ be a spectrally elliptic operator on a Hilbert space $\mathcal{H}$ and let $h \in \mathcal{B}(\mathcal{H})$ be bounded. Then, there exist $s_0>\dotsb>s_{k+1}$ such that, as $t \to 0^+$,
\[
  \tr  h e^{-t \Delta} - \sum_{i=0}^k t^{-s_k} c_i(h) = O(t^{-s_{k+1}}).
\]

As before, let $\{s_i\}_{i=1}^k$ be a set of decreasingly ordered reals and let $F$ be the Laplace transform of a piecewise continuous function $f$ supported in $[1,\infty)$, which is
$O(t^{-m})$ for all $ m \in \R$ towards $ \infty $, satisfies $\int_0^\infty
t^{-s_i} f(t) dt = 0$ for all $i < k$, and is normalized to satisfy $\int_0^\infty t^{-s_k} f(t) dt = 1$.

\begin{thm} \label{thm:localization}
  Let $\Delta \in \mathcal{D}(\{s_i\}_{i=1}^k)$ be a spectrally elliptic operator on a Hilbert space $\mathcal{H}$ and let $h \in \mathcal{B}(\mathcal{H})$ be bounded.
  Then, for any orthonormal basis $\{ \phi_\lambda \}_{\lambda \in
    \sigma(\Delta)}$ of $\mathcal{H}$ diagonalizing $\Delta$, we have
  \[
    \res_{s=s_k} \Gamma(s) \tr h \Delta^{-s} = \lim_{\Lambda \to \infty}
    \epsilon(\Lambda)^{s_k} \sum_{\mathmakebox[\stdsummationwidth]{\lambda \in \sigma(\Delta) \cap [0,\Lambda]}} \langle h
    \phi_\lambda, \phi_\lambda \rangle F( \lambda \epsilon(\Lambda)),
  \]
  where $\epsilon(\Lambda) \eqdef \Lambda^{-1} \log \Lambda^m $ for any $m > s_k - s_0$.
\end{thm}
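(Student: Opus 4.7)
The plan is to repeat the three-step structure used for Theorem \ref{thm:maintheorem}, simply dragging the weight $h$ through each step. The hypothesis that $\tr h e^{-t\Delta}$ admits an asymptotic expansion $\sum_{i=0}^k c_i(h) t^{-s_i} + O(t^{-s_{k+1}})$ is precisely what enables the convolution argument to extract $c_k(h) = \res_{s=s_k} \Gamma(s) \tr h \Delta^{-s}$, and the remaining two steps are responsible for passing from a full trace to a partial sum over eigenvalues.

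For the first step, I would apply the argument of Proposition \ref{prop:convolutionasymptotics} verbatim with $\tr h e^{-t\Delta}$ in place of $\tr e^{-t\Delta}$. Since $f$ has, by construction, vanishing $s_i$-moments for $i < k$ and unit $s_k$-moment, the Mellin convolution yields
\[
    \int_0^\infty \tr  h e^{-\epsilon t \Delta} f(t)\, dt = c_k(h) \epsilon^{-s_k} + O(\epsilon^{-s_{k+1}}).
\]
For the second step, I would rewrite the left-hand side as a weighted spectral sum. For each $t>0$ the operator $h e^{-\epsilon t \Delta}$ is trace class, with $\tr h e^{-\epsilon t \Delta} = \sum_\lambda \langle h \phi_\lambda, \phi_\lambda\rangle e^{-\epsilon t \lambda}$. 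The bound $|\langle h\phi_\lambda,\phi_\lambda\rangle| \leq \|h\|$ combined with rapid decay of $f$ and the estimate $\tr e^{-\epsilon t \Delta} = O((\epsilon t)^{-s_0})$ on $\supp f \subset [1,\infty)$ ensures absolute convergence of the double integral, so Fubini--Tonelli gives
\[
    \int_0^\infty \tr h e^{-\epsilon t \Delta} f(t)\, dt = \sum_{\lambda} \langle h \phi_\lambda, \phi_\lambda\rangle F(\lambda \epsilon).
\]

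The main obstacle is the tail estimate analogous to Proposition \ref{prop:partialtrace}, since the positivity of $F(\epsilon\lambda)$ is no longer what is being summed: the matrix coefficients $\langle h\phi_\lambda, \phi_\lambda\rangle$ may be complex and of either sign. The saving observation is that these coefficients are still uniformly bounded by $\|h\|$, so
\[
    \left| \sum_{\lambda > \Lambda} \langle h \phi_\lambda, \phi_\lambda \rangle F(\lambda \epsilon) \right| \leq \|h\| \sum_{\lambda > \Lambda} |F(\lambda \epsilon)|,
\]
and the tail bound from Proposition \ref{prop:partialtrace}, which only depended on $F(s) = O(s^{-1} e^{-s})$ and on the finite summability $\sum_\lambda e^{-\epsilon\lambda} = O(\epsilon^{-s_0})$, carries over with an extra factor $\|h\|$. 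Hence for $\epsilon(\Lambda) = m\log\Lambda/\Lambda$ with $m > s_0 - s_k$ the tail is $o(\epsilon^{-s_k})$, and combining the three steps yields the stated identity.
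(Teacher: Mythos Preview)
Your proposal is correct and follows essentially the same three-step argument as the paper: apply the convolution asymptotics of Proposition~\ref{prop:convolutionasymptotics} to $\tr h e^{-t\Delta}$, interchange sum and integral using the bound $|\langle h\phi_\lambda,\phi_\lambda\rangle|\leq\|h\|$ to get $\sum_\lambda \langle h\phi_\lambda,\phi_\lambda\rangle F(\lambda\epsilon)$, and then control the tail $\sum_{\lambda>\Lambda}$ by the same uniform bound together with the decay estimate from Proposition~\ref{prop:partialtrace}. Your observation that the proof of Proposition~\ref{prop:partialtrace} actually bounds $|F(\lambda\epsilon)|$ (via $F(s)=O(s^{-1}e^{-s})$) rather than just $F(\lambda\epsilon)$ is exactly what makes the third step go through despite the possibly complex weights.
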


\begin{proof}
  As before, Proposition \ref{prop:convolutionasymptotics} holds for any such function $f$. That is,
  \[
    \int_0^\infty \tr  h e^{- \epsilon t \Delta} f(t) dt = c_k(h) \epsilon^{- s_k}  + \| h \|_{\mathrm{op}} O(\epsilon^{- s_{k+1} }).
  \]
  Moreover, $\sum_{\lambda \in \sigma(\Delta)} |f(t) e^{- t \lambda}  \langle h \phi_\lambda, \phi_\lambda \rangle  | \leq |f(t)| \| h \|_{\mathrm{op}} \tr  e^{- t \Delta}$ is in $L^1(0,\infty)$. Therefore, by dominated convergence,
  \[
    \int_0^\infty \tr  h e^{- \epsilon t \Delta} f(t) dt = \sum_{\lambda \in \sigma(\Delta)} \langle h \phi_\lambda, \phi_\lambda \rangle F(\epsilon \lambda)
  \] 
  and by Proposition \ref{prop:partialtrace}, the rest term decays like 
  \[
    \sum_{\mathmakebox[\stdsummationwidth]{\lambda \in \sigma(\Delta) \cap [\Lambda,\infty)}}
    F(\lambda \epsilon(\Lambda)) \langle h \phi_\lambda, \phi_\lambda \rangle
    = \| h \|_{\mathrm{op}} O( (\epsilon(\Lambda))^{-s_{k+1}} ).
  \]
  We have $\res_{s=s_k} \Gamma(s) \tr h \Delta^{-s} = c_k(h)$ and the conclusion follows.
\end{proof}

\section{Final remarks and suggestions}
\begin{remark}
  The method of Theorem \ref{thm:localization} may be applied to obtain a reasonable definition of scalar curvature in \emph{finite-dimensional} noncommutative geometry. Let $(A,H,D)$ be a spectral triple such that $D^2$ is spectrally elliptic with heat spectrum $\{(j-n)/2 \}_{j \in \Z_{>0}}$. In \cite{Connes2008a} the scalar curvature functional of such a spectral triple was defined to be the map 
  \[
    a \mapsto R(a) \eqdef \res_{s=n-2} \Gamma(s) \tr a D^{-s}.
  \]
  If $H$ is finite-dimensional but a dimension spectrum of $D$ is specified in advance (e.g. by modelling considerations), this residue always vanishes and Theorem \ref{thm:localization} suggests it should perhaps be replaced by
  \[
    a \mapsto R_\Lambda(a) \eqdef \sum_{\lambda \in \sigma(D^2)} \langle a
    \phi_\lambda, \phi_\lambda \rangle \frac{  F(\lambda \log \Lambda^m / \Lambda)
    }{ (\log \Lambda^m / \Lambda)^{n/2-1}},
  \]
  where $\Lambda \eqdef \| D^2 \|^\rho$ for any $0 < \rho < 1$, $\{\phi_\lambda
  \mid D^2 \phi_\lambda = \lambda \phi_\lambda \}$ is an orthonormal basis of $H$
  and $F$ is as in Corollary \ref{corr:secondpole}.
\end{remark}

\begin{remark}
  The improved convergence in Corollary \ref{corr:firstpole}, combined with Connes' Hochschild class formula \cite[Thm 4.2.$\gamma$.8]{Connes} for the Chern character of the Fredholm module associated to a spectral triple $(A,H,D)$, allows one to estimate some $\operatorname{KK}$-theoretic index pairings numerically. If the square of the operator $D$ is spectrally elliptic, the index pairings can be expressed as a series in the spectrum of $D^2$ and the coefficients $\tr \pi_{\lambda} a_0 [D,a_1] \dotsb [D,a_n] \pi_{\lambda}$, where $\pi_\lambda$ projects onto the eigenspace associated with the eigenvalue $\lambda$ and $a_0,\dotsc,a_n \in A$. A similar statement holds in the presence of a grading.
\end{remark}

A previous version of this paper was circulated on the arXiv under the same
title. This version has been
highly simplified and the present approximations converge exceedingly faster,
both asymptotically and in all tested examples.

\bibliographystyle{alpha}
\bibliography{library}

\end{document}